\newtheorem{theorem}{Theorem}[section]
\newtheorem{lemma}{Lemma}[section]
\newtheorem{df}{Definition}[section]
\newtheorem{obs}{Observation}[section]
\newcommand{\cG}{\mathcal{G}}
\newcommand{\natplus}{\mathbb{N}^+}
\newenvironment{proof}{{\bf Proof.}}{\hspace*{\fill} \rule{2mm}{2mm} \par \hspace{0.1mm}}
\title{Twin Domination Number of Tournaments}
\author[1]{Dorota Osula}
\author[2]{Rita Zuazua}
\affil[1]{Faculty of Electronics, Telecommunications and Informatics \protect\\ Gda\'nsk University of Technology, 80-233 Gda\'nsk, Poland \protect\\
  \texttt{dorurban@student.pg.edu.pl}}
\affil[2]{Universidad Nacional Aut\'onoma de M\'exico, Mexico\protect\\
  \texttt{ritazuazua@ciencias.unam.com}}
\date{}
\begin{document}
\maketitle
\begin{abstract} 
Let $D=(V,A)$ be a digraph. A subset $S$ of $V$ is called a twin dominating set of $D$ if for every vertex $v\in V-S$, there exists vertices $u_1,u_2 \in S$ such that $(v,u_1)$ and $(u_2,v)$ are arcs in $D$. The minimum cardinality of a twin dominating set in $D$ is called the twin domination number of $D$ and is denoted by $\gamma ^{*}(D)$.

In \cite{ChDSS}, is defined the concept of upper orientable twin domination number of a graph $G$, 
$DOM^{*}(G)=\max\{ \gamma ^{*}(D)|D \ \text{is an orientation of G} \}.$ In \cite{AES}, it is conjectured that for the complete graph $K_n$ with $n\geq 8$, $DOM^{*}(K_n)=\left\lceil \frac{n+1}{2}\right\rceil$. In this work we prove $DOM^{*}(K_8)= DOM^{*}(K_9)= 4$ and establish new upper bounds for $DOM^{*}(K_n)$, disproving the same above conjecture for all $n \geq 8$. 

\end{abstract}

{\it Keywords:} Twin domination number, twin dominating set, upper orientable twin domination number, tournaments.

{\it AMS Subject Classification Numbers:}  05C69; 05C20; 05C35.

\section{Introduction}

Let $D=(V,A)$ be a digraph. For any vertex $v\in V$, the sets   $I_D(v)=\{ u |(u,v)\in A\}$ and $O_D(v)=\{ u|(v,u)\in A\}$  are called the inset and outset of $v$. The indegree and outdegree of $v$ are defined by $id_D(v)=|I_D(v)|$ and $od_D(v)=|O_D(v)|$. For any $S \subseteq V$, graph $\cG_D[S]$ is the subgraph of $D$ induced by the set of vertices $S$. The bottom index is omitted, when the digraph is clear from the context. We say that a set of vertices $S \subseteq V$ \emph{out-dominates} (\emph{in-dominates}) $V$ if for every $v \in V - S$ there exists $u \in S$ such that $(v,u) \in A$ ($(u,v) \in A$). A tournament is an orientation of a complete graph. 

\begin{df} Let $D=(V,A)$ be a digraph. A subset $S$ of $V$ is called a twin dominating set of $D$ if for every vertex $v\in V-S$, there exists vertices $u_1,u_2 \in S$ ($u_1$ and $u_2$ may be equal) such that $(v,u_1)$ and $(u_2,v)$ are arcs in $D$. The minimum cardinality of a twin dominating set in $D$ is called the twin domination number of $D$ and is denoted by $\gamma ^{*}(D)$. 
\end{df}

For different orientations $D_1$ and $D_2$ of a graph $G$, it is possible to have 
$\gamma ^{*}(D_1)\not= \gamma ^{*}(D_2)$. In {\sc Chartrand} et al.~\cite{ChDSS}, the authors defined the concept of upper orientable twin domination number of a graph $G$, 
\begin{align*}
DOM^{*}(G)=\max\{ \gamma ^{*}(D)|D \ \text{is an orientation of }G \}.
\end{align*}

{\sc Arumugam} et al.~\cite{AES} proved that for $1 \leq n \leq 7$ and the complete graph $K_n$, $DOM^{*}(K_n) = \left\lceil \frac{n+1}{2}\right\rceil$ and they conjectured that for $n\geq 8$, $DOM^{*}(K_n)=\left\lceil \frac{n+1}{2}\right\rceil$. We prove in Section~$2$ that for $n=8$, $DOM^{*}(K_8)= 4$ and in Section~$3$ that $DOM^{*}(K_9)= 4$. Then in Section~$4$ we expand these results by proving $DOM^{*}(K_n)\leq\left\lfloor \frac{n}{2}\right\rfloor$ for all $n\geq 8$. In Section~$5$ the upper bound of $2\left\lceil \log_2{(n-1)}\right\rceil$ is shown and the summary of all results presented.

\section{$DOM^*(K_8) = 4$}\label{sec:for8}

In this section we prove the exact value of the upper orientable twin domination number of the graph $K_8$ to be equal to $4$, i.e. $DOM^*(K_8) = 4$. The following observation is a consequence of the results given in {\sc Arumugam} et al.~\cite{AES}. 

\begin{obs} \label{sink-source}
Let $T$ be a tournament of order $n\geq 3 .$ From \textit{Theorem 2.6} of {\sc Arumugam} et al.~\cite{AES}, if $T$ contains at least one vertex 
$u \in V(T)$ such that $id (u) = 0$ or $od (u) = 0$, then   $\gamma ^{*}(T) \leq \left\lceil \log_2{(n-1)}+1\right\rceil $. In the case of $n\geq 8$,  $\gamma ^{*}(T)\leq \left\lfloor \frac{n}{2}\right\rfloor $. 
\end{obs}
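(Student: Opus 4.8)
The plan is to treat the two claims of the observation separately. The first inequality, $\gamma^{*}(T)\le\lceil\log_2(n-1)+1\rceil$, is exactly the content of \textit{Theorem 2.6} of \cite{AES} specialized to a tournament that possesses a source ($id(u)=0$) or a sink ($od(u)=0$); since we are permitted to invoke that theorem, I would simply cite it and record the bound. Everything then reduces to the purely numerical claim that
$$\left\lceil \log_2(n-1)+1\right\rceil \;\le\; \left\lceil \tfrac{n}{2}\right\rceil \qquad\text{for all integers } n\ge 8.$$

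Because the ceiling function is monotone nondecreasing, it suffices to prove the sharper real-variable inequality $\log_2(x-1)+1\le \tfrac{x}{2}$ for all real $x\ge 8$, whereupon the ceilings are reinstated on both sides. To this end I would set $f(x)=\tfrac{x}{2}-\log_2(x-1)-1$ and show $f(x)\ge 0$ on $[8,\infty)$. First I would check the base point, $f(8)=3-\log_2 7>0$. Then, differentiating, $f'(x)=\tfrac{1}{2}-\tfrac{1}{(x-1)\ln 2}$, which is positive precisely when $(x-1)\ln 2>2$, i.e. for all $x\ge 4$ and in particular for $x\ge 8$. Hence $f$ is strictly increasing on $[8,\infty)$ and, being positive at $x=8$, remains positive throughout, which gives $\log_2(x-1)+1\le x/2$ for every $x\ge 8$.

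Combining the two parts yields $\gamma^{*}(T)\le\lceil\log_2(n-1)+1\rceil\le\lceil n/2\rceil$ for $n\ge 8$, as required. I do not expect any genuine obstacle here: the only points needing a little care are the passage between the two ceiling expressions, which is handled by monotonicity, and the observation that the slope of $\log_2(x-1)$ has already dropped below $\tfrac12$ by $x=8$, so that the elementary estimate at the endpoint propagates to all larger $n$. A fully arithmetic alternative would replace the derivative computation by induction on $n$, using $f(8)>0$ as the base case together with the fact that increasing $n$ raises $\tfrac{n}{2}$ at least as fast as it raises $\log_2(n-1)+1$; I would nonetheless keep the monotone-derivative version, since it sidesteps the bookkeeping with ceilings.
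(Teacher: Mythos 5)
Your proposal is correct and matches the paper's (implicit) argument exactly: the paper states this observation as a direct consequence of \emph{Theorem 2.6} of \cite{AES} together with the unstated numerical fact $\left\lceil \log_2(n-1)+1\right\rceil \le \left\lceil \frac{n}{2}\right\rceil$ for $n\ge 8$, which you verify cleanly via the monotonicity of $f(x)=\frac{x}{2}-\log_2(x-1)-1$ on $[8,\infty)$ and the base check $f(8)=3-\log_2 7>0$. Your write-up simply supplies the elementary calculus details the paper leaves to the reader, so there is nothing to correct.
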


Above observation assures us that if a tournament of order $8$ contains a sink or source vertex then $DOM^*(K_8) \leq 4$. In the next lemma we prove that this result holds also when a tournament contains a vertex of in- or outdegree $2$.

\begin{lemma} \label{thm:eight_two} Let $T$ be an orientation of $K_8$. If there exists $v\in V(T)$ such that $ id (v) = 2$ or $od (v) = 2$, then $\gamma ^{*}(T)\leq 4$. 
\end{lemma}

\begin{proof}
Suppose there exists a vertex $v \in V(T)$ such that $id(v)=2$, $I(v)=\{ i, i'\}$ and $O(v)=\{ o_1, o'_1, o_2, o'_2,z \}$. Without loss of generality we can assume that
 the arcs $(i,i'), (o_{1}',o_1), (o_{2}',o_2) \in A(T)$.
\begin{enumerate}
\item If the arcs $ (z,i), (z,o_1) $ or $(z, o_2)$ are in $A(T)$, then $S = \{v, i, o_1, o_2\}$ is a twin dominating set of $T$. So we can assume that the arcs 
$(i,z), (o_1,z), (o_2,z) \in A(T)$. See Figure~\ref{fig:example8_two}.
\item If the arc $(o_{1}',o_{2}') \in A(T)$, then the set $S = \{v, i, z, o_2'\}$ is a twin dominating set of $T$. If the arc $(o_{2}',o_{1}') \in A(T)$, then the set 
$S = \{v, i, z, o_1'\}$ is a twin dominating set of $T$. 
\end{enumerate}

Therefore, if $id(v)=2$, $\gamma ^{*}(T)\leq 4$. The case $od (v) = 2$, is symmetric. 

\end{proof}

\begin{figure}[htb]
\begin{center}
\centering
  \includegraphics[scale=0.6]{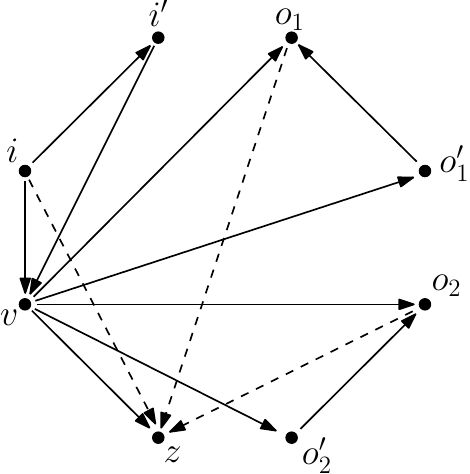}
	\end{center}
\caption{Illustration of Step 1 from Lemma~\ref{thm:eight_two} for the tournament $T$ of order $8$, which has a vertex $v$ of $id(v)=2$; \textit{dashed arrows} denote new added edges in Step~1.}
\label{fig:example8_two}
\end{figure}

In the next considered case a tournament of order $8$ contains a vertex of in- or outdegree $1$.

\begin{lemma}\label{thm:eight_one} Let $T$ be an orientation of $K_8$. If there exists $v\in V(T)$ such that $ id (v) = 1$ or $od (v) = 1$, then $\gamma ^{*}(T)\leq 4$.
\end{lemma}

\begin{proof} Let $v\in V(T)$ such that  $id(v)=1$  with $I(v)=\{ z \}$ and 
$O(v)=\{ o_1,o_1',  o_2, \allowbreak o_2', o_3,o_3' \}$. Without loss of generality we can assume that the arcs 
$(o_1',o_1),$ $(o_2',o_2),$ $(o_3',o_3) \in A(T)$. 
\begin{enumerate}
\item If $(o_1,z), (o_2,z)$ or $(o_3,z)$ are in $A(T)$, then $S=\{ v, o_1, o_2, o_3\}$ is a twin dominating set of $T$. 
So we can assume that $\{ v, o_1,o_2,o_3\} \subseteq O(z)$. 

\item By Observation~\ref{sink-source}, we can assume that $id(z)\neq 0$. Assume, without lost of generality, that 
$(o_3',z) \in A(T)$. If one of the arcs $(o_3,o_1)$ or $(o_3,o_2)$ are in $T$, then  $S = \{v, z, o_1, o_2\}$ is a twin dominating set of $T$. So let us assume that 
$(o_1,o_3), (o_2,o_3) \in A(T)$. See Figure~\ref{fig:example8_one}. 

\item If $(o_3,o_1')$ and  $(o_3,o_2')$ are arcs in $T$, then $od(o_3) = 2$ and from Lemma~\ref{thm:eight_two}, $\gamma^*(T) \leq 4$. On the other hand, 
if $(o'_1,o_3)$ (resp. $(o'_2,o_3)$) is an arc in $T$, then  $S=\{v,z,o_2,o_3\}$  (resp. $S=\{v,z,o_1,o_3\}$) is a twin dominating set of $T$.
\end{enumerate}

The case $od (v)=1$ is symmetric. 
\end{proof}

 \begin{figure}[htb]
\centering
\begin{center}
\includegraphics[scale=0.6]{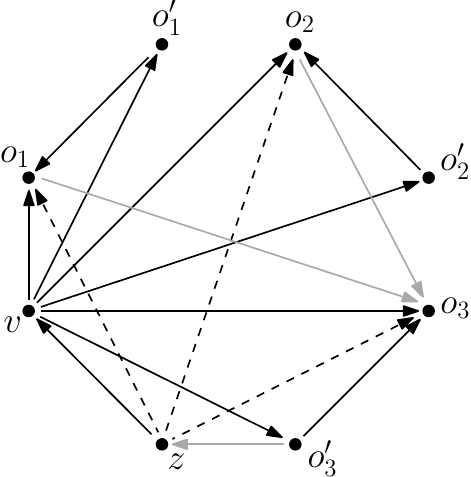}
\end{center}
\caption{Illustration of Step 1-2 from  Lemma~\ref{thm:eight_one} for the tournament $T$ of order $8$, which has a vertex $v$ of $id(v)=1$; \textit{dashed arrows} denote new added arcs in Step~1 and \textit{gray arrows} denote new added arcs in Step~2.}
\label{fig:example8_one}
\end{figure}

Before we prove the final equality we make an observation about the lower bound of $4$ for $DOM^{*}(K_n)$, for any $n \geq 6$.

\begin{obs}\label{obs:more_than_four} Denote by  $T^6_n$ the tournament of order $n+6$ of Figure~\ref{fig:more_than_four}. {\sc Arumugam} et al.~\cite{AES} proved that $\gamma ^*(T^6_0)=\gamma ^*(T^6_1)=4$. So for any $n\geq 0$, $\gamma ^*(T^6_n)=4$ and  $4\leq DOM^{*}(K_{n+6})$ .

\end{obs}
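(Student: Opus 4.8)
The plan is to prove the equality $\gamma^*(T^6_n) = 4$ by induction on $n$ in steps of two, using the two base cases $\gamma^*(T^6_0) = \gamma^*(T^6_1) = 4$ from \cite{AES}. Since $n=0$ and $n=1$ cover both parities, establishing the inductive step from $T^6_n$ to $T^6_{n+2}$ suffices for all $n \geq 0$. The construction in Figure $3$ places a fixed six-vertex core together with $n$ additional vertices attached by a repeating (rotational) pattern, so that $T^6_{n+2}$ is obtained from $T^6_n$ by inserting two further vertices that relate to the core and to the previously added vertices in exactly the same prescribed way. This uniformity is the feature the whole argument exploits.

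For the upper bound I would fix, once and for all, the four-vertex twin dominating set $S \subseteq V(T^6_0)$ guaranteed by the base case, and check that $S$ continues to twin-dominate in every $T^6_n$. Concretely, for each newly added vertex $w$ it is enough to verify, directly from the attachment rule, that $w$ has at least one out-neighbour and at least one in-neighbour inside $S$; since the core vertices are already twin-dominated by $S$ in $T^6_0$ and their arcs to and from $S$ are unchanged, this gives $\gamma^*(T^6_n) \leq 4$ for every $n$ without any further case analysis.

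The harder half is the lower bound $\gamma^*(T^6_n) \geq 4$, which I would also handle inductively. Assume $\gamma^*(T^6_n) = 4$ and suppose toward a contradiction that $T^6_{n+2}$ admits a twin dominating set $S'$ with $|S'| \leq 3$. The key step is a reduction: delete the two most recently added vertices to return to a copy of $T^6_n$, and convert $S'$ into a twin dominating set of $T^6_n$ of size at most $3$. If $S'$ avoids the two deleted vertices this is immediate, because the arcs among the surviving vertices are unchanged and the deleted vertices were not needed to dominate anyone outside themselves. If $S'$ uses one or both deleted vertices, the rotational symmetry of the construction lets me replace each such vertex by a structurally equivalent vertex of $T^6_n$ having the same in- and out-relations to the rest, preserving twin domination while not increasing the size of the set. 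Either way we obtain a twin dominating set of $T^6_n$ of size $\leq 3$, contradicting the inductive hypothesis.

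I expect the swap in the lower-bound reduction to be the main obstacle: one must check that the two inserted vertices really can be exchanged for old vertices without breaking the domination of any third vertex, and this is exactly where the precise arc pattern of Figure $3$ matters. Once the attachment rule is written out explicitly, the verification should reduce to a finite, local check on the neighbourhoods of the inserted vertices, after which the conclusion $4 \leq DOM^{*}(K_{n+6})$ follows immediately, since $T^6_n$ is an orientation of $K_{n+6}$ with $\gamma^{*} = 4$.
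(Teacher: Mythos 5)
Your upper-bound half is sound and in fact coincides with all the paper itself offers: the paper gives no inductive argument for this observation, relying on the AES base cases together with Figure 3, whose caption exhibits a single boxed $4$-set that twin-dominates $T^6_n$ for every $n\geq 0$ --- exactly your check that each added vertex has an in-neighbour and an out-neighbour in the fixed set $S$. The genuine gap is in your lower bound, which is the only half the observation actually needs, since $4 \leq DOM^{*}(K_{n+6})$ follows from $\gamma^*(T^6_n)\geq 4$ alone. Your reduction is complete only in the easy case where the small twin dominating set $S'$ of $T^6_{n+2}$ avoids the two deleted vertices (there the restriction argument is correct). In the remaining case you invoke a ``rotational symmetry'' allowing you to swap a deleted vertex $w$ for an old vertex $u$ with the same in- and out-relations to the surviving vertices, but you never exhibit such a $u$ or verify that it exists, and for typical constructions of this shape it need not: if, for instance, the added vertices form a chain in which each newly inserted vertex beats all previously added ones, then a newest vertex in $S'$ out-dominates every surviving added vertex, and no vertex of $T^6_n$ reproduces those relations, so any swap can destroy the domination of some third vertex. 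You flag this yourself as ``the main obstacle'' and defer it to ``a finite, local check'' against the precise arc pattern of Figure 3 --- but that check \emph{is} the entire content of the lower bound, so as written your argument is conditional on an unverified structural claim, not a proof.

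To close the hole you would either have to write out the attachment rule of Figure 3 and prove that the map shifting the added vertices by two induces an embedding of $T^6_n$ into $T^6_{n+2}$ compatible with your replacement (which is what your symmetry claim silently assumes), or abandon the swap and argue the lower bound directly, e.g.\ by showing that every $3$-subset of $V(T^6_n)$ fails to twin-dominate because of the fixed $6$-vertex core, independently of $n$. Without one of these, the equality $\gamma^*(T^6_n)=4$ is established only for $n=0,1$, and the conclusion $4\leq DOM^{*}(K_{n+6})$ (your reading of the paper's statement, which is the correct one) remains unproven for $n\geq 2$.
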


 \begin{figure}[htb] 
\centering
\begin{center}
  \includegraphics[scale=0.6]{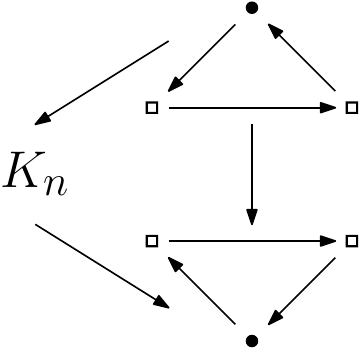}
	\end{center}
\caption{Illustration for Observation~\ref{obs:more_than_four}; \textit{boxes} denote the twin domination set of $T^6_n$ of size $4$, for $n\geq 0$.}
\label{fig:more_than_four}
\end{figure}

We close this section by presenting the main theorem, which summarizes previous results and considers the remaining case, when all vertices of a tournament of order $8$ have in- or outdegree equal to $3$.

\begin{theorem}\label{case8} The upper orientable twin domination number of $K_8$, $DOM^{*}(K_8)= 4$. 
\end{theorem}

\begin{proof} 

Let $T$ be an orientation of $K_8$.  By  Observation~\ref{sink-source} and Lemmas~\ref{thm:eight_two}--\ref{thm:eight_one}, if there exist $v\in V(T)$ such that $id(v)\in \{0,1,2,5,6,7\}$, then $\gamma ^* (T)\leq 4$. So, we can suppose that for every $v\in V(T)$, $id(v)=3$ or $id(v)=4$.

Let $v\in V(T)$ such that $O(v)=\{ o_1,o_{1}',o_2,o_{2}'\}$ and $I(v)=\{ i,i',z\} $. Without loss of generality, we can suppose that the arcs $(i,i'), (o_1',o_1), (o_2',o_2)\in A(T)$. 

If one of the arcs $(i,z), (o_1,z)$ or $(o_2,z)$ are in $A(T)$, then $S=\{ v,i,o_1,o_2\}$ is a twin dominating set of $T$. So, we can suppose that $O(z)=\{ v, i, o_1, o_2\}$ and $I(z)=\{ i', o_1',o_2'\}$. 

\begin{enumerate}
\item If $(o_1,o_2)\in A(T)$, then $S=\{ v,z,i,o_2\}$ is a twin dominating set of $T$.
\item If $(o_2,o_1)\in A(T)$, then $S=\{ v,z,i,o_1\}$ is a twin dominating set of $T$.    
\end{enumerate}

The case when $id(v)=4$ is symmetric. Therefore, $DOM^{*}(K_8)\leq 4$. By Observation~\ref{obs:more_than_four}, we can conclude that $DOM^{*}(K_8)=4.$

\end{proof}

\section{$DOM^*(K_9) = 4$}\label{sec:for9}

In this section we prove the exact value of the upper orientable twin domination number of the graph $K_9$ to be equal to $4$, i.e., $DOM^*(K_9) = 4$. First, let us make a simple observation, that will be very useful in the rest of the section. 

\begin{obs}\label{obs:equal}
For every tournament $T$ of order $n$, since 
\[\sum\limits_{v \in V(T)} id(v) =
\sum\limits_{v \in V(T)} od(v) = \frac{n(n-1)}{2},\]
there exist vertices $u_1,u_2 \in V(T)$, such that $od(u_1), id(u_2) \leq \left\lfloor\frac{n-1}{2} \right\rfloor$.
\end{obs}

Similarly, as for the previous case of the number of vertices $8$, we show that our result holds if a graph contains at least one vertex with a specified in- or out-degree. Observation~\ref{sink-source} assures us that if a tournament of order $9$ contains a sink or source vertex then $DOM^*(K_9) \leq 4$. In the next lemma we prove that this result holds also when a tournament contains a vertex of in- or outdegree $2$.

\begin{lemma} \label{thm:nine_two} Let $T$ be an orientation of $K_9$. If there exists $v\in V(T)$ such that $ id (v) = 2$ or $od (v) = 2$, then $\gamma ^{*}(T)\leq 4$. 
\end{lemma}

\begin{proof}
Suppose there exists a vertex $v \in V(T)$ such that $id_T(v)=2$. Denote the tournament induced by the outset of $v$ as $T_1 = \cG[O_T(v)]$. By Observation~\ref{obs:equal} there exists a vertex $v_1 \in V(T_1)$, such that $od_{T_1}(v_1) \leq 2$ (because $|V(T_1)| = 6$). Let $T_2 = \cG[O_{T_1}(v_1)]$. Let $v'_1 \in I_T(v)$ be a vertex such that $od_{\cG[I_T(v)]}(v'_1) = 1$, and let $v_2 \in V(T_2)$ be such that $id_{T_2}(v_2) = 1$ if $|V(T_2)| = 2$ or simply $\{v_2\} = V(T_2)$ if $|V(T_2)| = 1$. Notice now that $S =\{ v, v_1, v'_1, v_2\}$ is a twin dominating set of $T$. Indeed,
\begin{itemize}
\item $v$ in-dominates $V(T_1)\cup\{v\}$, $v'_1$ in-dominates $I_T(v)$ and 
\item $v$ out-dominates $V(T)\backslash V(T_1)$, $v_1$ out-dominates $V(T_1)\backslash V(T_2)$ and $v_2$ out-dominates $V(T_2)$.
\end{itemize}
See Figure~\ref{fig:example9_two} for the illustration. Notice that if $V(T_2)$ is the empty set, then $S = \{v, v_1, v'_1\}$ is a twin dominating set. Therefore, if $id_T(v) = 2$, $\gamma ^{*}(T)\leq 4$. The case $od_T(v) = 2$ is symmetric.
\end{proof}

\begin{figure}[htb]
\begin{center}
\centering
  \includegraphics[scale=0.9]{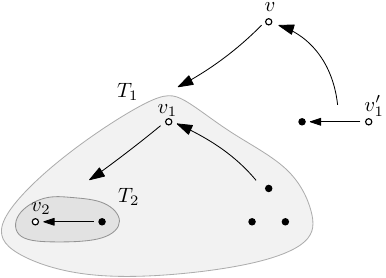}
	\end{center}
\caption{Illustration for Lemma~\ref{thm:nine_two} for the tournament $T$ of order $9$, which has a vertex $v$ of $id(v)=2$; \textit{circles} denote the twin dominating set of $T$.}
\label{fig:example9_two}
\end{figure}

In the next considered case a tournament of order $9$ contains a vertex of in- or outdegree $1$.

\begin{lemma} \label{thm:nine_one} Let $T$ be an orientation of $K_9$. If there exists $v\in V(T)$ such that $ id (v) = 1$ or $od (v) = 1$, then $\gamma ^{*}(T)\leq 4$. 
\end{lemma}

\begin{proof}
Suppose there exists a vertex $v \in V(T)$ such that $id_T(v)=1$. We denote the tournament induced by the outset of $v$ as $T_1 = \cG[O_T(v)]$ and $\{v'_1\} = I_{T}(v)$. By Observation~\ref{obs:equal} there exists a vertex $v_1 \in T_1$ such that $od_{T_1}(v_1) \leq 3$. Let $T_2 = \cG[O_{T_1}(v_1)]$. Repeating this reasoning, we pick a vertex $v_2 \in V(T_2)$ such that $od_{T_2}(v_2) \leq 1$ and let $T_3 = \cG[O_{T_2}(v_2)]$.
While $|V(T_2)| \leq 3$ and $|V(T_3)| \leq 1$ several cases have to be considered. If $|V(T_2)| = 0$, then $S = \{v,v_1,v'_1\}$ is a twin domination set of $T$. If $|V(T_2)| \in  \{1, 2\}$ then $S = \{v,v_1,v'_1, u\}$, where $u \in V(T_2)$ such that $id_{T_2}(u) = 1$ if $|V(T_2)| = 2$ or simply $\{u\} = V(T_2)$ if $|V(T_2)| = 1$, is a twin domination set of $T$. Finally, if $|V(T_2)| = 3$ and $|V(T_3)| = 0$, then $S = \{v,v_1,v'_1,v_2\}$ is a twin domination set of $T$. Therefore, assume $|V(T_2)| = 3$ and $|V(T_3)| = 1$ and let $\{v_3\} = V(T_3)$.

\begin{enumerate}
\item If $(v_1,v_1'), (v_2,v'_1)$ or $(v_3,v_1')$ are in $A(T)$, then $S = \{v,v_1,v_2,v_3\}$ is a twin domination set of $T$. So let us assume that $(v_1',v_1), (v'_1,v_2), (v_1',v_3) \in A(T)$.
\item If $(s, v_1') \in A(T)$, where $\{s\} = I_{T_2}(v_2)$, then $S = \{v,v_1,v'_1,v_3\}$ is a twin domination set of $T$. So let us assume that $(v_1', s) \in A(T)$.
\end{enumerate}

If $id_T(v_1') \in \{0,2\}$, then from Observation~\ref{sink-source} and Lemma~\ref{thm:nine_two} we have $\gamma ^{*}(T)\leq 4$. Therefore, we consider two remaining cases.

\noindent
\emph{Case 1: $id_T(v_1') = 1$.} Let $c_1, c_2$ and $c_3$ denote the vertices, which form an oriented cycle in $\cG[I_{T_1}(v_1)]$. Without loss of generality we assume $(v'_1,c_1), (c_2,v'_1), (v_1', c_3) \in A(T)$. It can be observed now, that $S = \{v'_1,v_2,v_3,c_1\}$ is a twin domination set of $T$. See Figure~\ref{fig:example9_oneA}.

\noindent
\emph{Case 2: $id_T(v_1') = 3$.} We immediately obtain a twin dominating set of $T$ as $S = \{v, v'_1,v_2,v_3\}$. See Figure~\ref{fig:example9_oneB}. 

Therefore, if $id_T(v) = 1$, $\gamma^*(T) \leq 4$. The case $od_T(v) = 1$ is symmetric.
\end{proof}

  \begin{figure}[htb]
\centering
\begin{subfigure}{.47\textwidth}
  \centering
  \includegraphics[width=\textwidth]{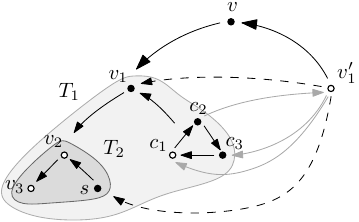}
  \caption{Case~1: $id_T(v'_1) = 1$.}
  \label{fig:example9_oneA}
\end{subfigure}~~
\begin{subfigure}{.47\textwidth}
  \centering
  \includegraphics[width=\textwidth]{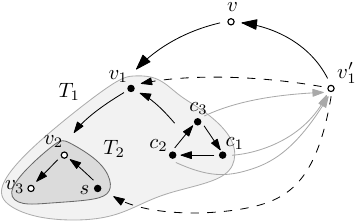}
  \caption{Case~2: $id_T(v'_1) = 3$.}
  \label{fig:example9_oneB}
\end{subfigure}
\caption{Illustration of Case~1-2 from Lemma~\ref{thm:nine_one} for the tournament $T$ of order $9$, which has a vertex $v$ of $id(v)=1$; \emph{dashed arrows} denote new added arcs in Step~1-2 and \emph{gray arrows} denote new added arcs in each case.}
\label{fig:example9_one}
\end{figure}

We continue our reasoning by looking at a tournament of order $9$ that contains a vertex of in- or outdegree $3$.

\begin{lemma} \label{thm:nine_three} Let $T$ be an orientation of $K_9$. If there exists $v\in V(T)$ such that $ id (v) = 3$ or $od (v) = 3$, then $\gamma ^{*}(T)\leq 4$. 
\end{lemma}

\begin{proof}
Suppose there exists a vertex $v \in V(T)$ such that $id_T(v)=3$. We denote the tournaments induced by the out- and insets of $v$ as $T_1 = \cG[O_T(v)]$ and $T'_1 = \cG[I_T(v)]$ respectively. By Observation~\ref{obs:equal} there exist vertices $v_1 \in T_1$ and $v_1' \in T'_1$, such that $od_{T_1}(v_1) \leq 2$ and $id_{T'_1}(v_1') \leq 1$. Denote $T_2 = \cG[O_{T_1}(v_1)]$ and $T'_2 = \cG[I_{T'_1}(v'_1)]$. While $|V(T_2)| \leq 2$ and $|V(T'_2)| \leq 1$ several cases have to be considered. Notice first that if $|V(T_2)| = |V(T'_2)| = 0$, then $S = \{v, v_1, v'_1\}$ is a twin dominating set of $T$. On the other hand, if $|V(T_2)| = 0$ and $|V(T'_2)| \neq 0$ ($|V(T'_2)| = 0$ and $|V(T_2)| \neq 0$), then set $S = \{v, v_1, v'_1, u\}$, where $u \in V(T'_2)$ ($u \in V(T_2)$, respectively) is a twin dominating set of $T$. Therefore, there are two remaining cases.

\noindent
\emph{Case 1: $|V(T_2)| = |V(T'_2)| = 1$.} Let $\{v_2\} = V(T_2)$, $\{v_2'\} = V(T'_2)$ and $\{s\} = O_{T'_1}(v'_1)$. 
\begin{enumerate}
\item If $(v_2,v_2'), (v_2,v'_1)$ or $(v_1,v_2')$ are in $A(T)$, then $S = \{v,v_1,v_1',v_2\}$ or $S = \{v,v_1,v_1',v_2'\}$ are twin domination sets of $T$. So let us assume that $(v_2',v_2), \allowbreak (v'_1,v_2), (v_2',v_1) \in A(T)$. See Figure~\ref{fig:example9_threeA}.
\item If $(s,v_1) \in A(T)$, then $od(v_1) \leq 2$ and from previous lemmas $\gamma^*(T) \leq 4$. On the other hand, if $(v_1,s) \in A(T)$, then $S = \{v,v_1,v_2,v_2'\}$ is a twin domination set of $T$.
\end{enumerate}

\noindent
\emph{Case 2: $|V(T_2)| = 2$ and $|V(T'_2)| = 1$.} Let $\{v_2'\} = V(T'_2)$, $\{s\} = O_{T'_1}(v'_1)$ and $v_2 \in V(T_2)$, such that $id_{T_2}(v_2) = 1$. 
\begin{enumerate}
\item If $(v_2,v_2')$ or $(v_1,v_2')$ are in $A(T)$, then $S = \{v,v_1,v_1',v_2\}$ is a twin domination set of $T$. So let us assume that $(v_2',v_2), (v_2',v_1) \in A(T)$.
\item If $(v_1,s), (v_2,s)$ or $(v'_2,s)$ are in $A(T)$, then $S = \{v,v_1,v_2,v'_2\}$ is a twin domination set of $T$. So let us assume that $(s,v_1), (s,v_2), (s,v'_2) \in A(T)$. See Figure~\ref{fig:example9_threeB}.
\item If $(v'_1,v_1) \in A(T)$, then $od_T(v_1) = 2$ and from previous lemma $\gamma^*(T) \leq 4$. On the other hand, if $(v_1,v'_1) \in A(T)$, then $S = \{v,v_1,v_2,s\}$ is a twin domination set of $T$.
\end{enumerate}
Therefore, if $id_T(v) = 3$, $\gamma^*(T) \leq 4$. The case $od_T(v) = 3$ is symmetric.
\end{proof}

  \begin{figure}[htb]
\centering
\begin{subfigure}{.47\textwidth}
  \centering
  \includegraphics[width=\textwidth]{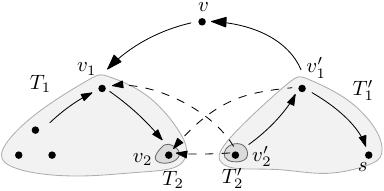}
  \caption{Case 1: Step 1; \emph{dashed arrows} denote new added arcs.}
  \label{fig:example9_threeA}
\end{subfigure}~~
\begin{subfigure}{.47\textwidth}
  \centering
  \includegraphics[width=\textwidth]{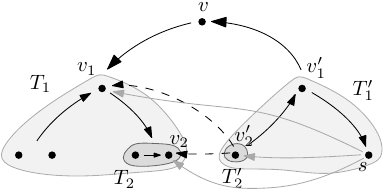}
  \caption{Case 2: Steps 1-2; \emph{dashed arrows} denote new added arcs in Step $1$ and \emph{gray arrows} denote new added arcs in Step $2$.}
  \label{fig:example9_threeB}
\end{subfigure}

\caption{Illustration of Case 1-2 from Lemma~\ref{thm:nine_three} for the tournament $T$ of order $9$, which has a vertex $v$ of $id(v)=3$.}
\label{fig:example9_three}
\end{figure}

We close this section by presenting the main theorem, which summarizes previous results and considers the remaining case, when all vertices of a tournament of order $9$ have indegree equal to $4$.

\begin{theorem}\label{case9} The upper orientable twin domination number of $K_9$, $DOM^{*}(K_9)= 4$. 
\end{theorem}

\begin{proof} 
Let $T$ be an orientation of $K_9$.  By Observation~\ref{sink-source} and Lemmas~\ref{thm:nine_two}--\ref{thm:nine_three}, if there exists $v\in V(T)$ such that $id_T(v)\in \{0,1,2,3,5,6,7,8\}$, then $\gamma ^* (T)\leq 4$. So, we can suppose that for every $v\in V(T)$, $id_T(v)=4$.

Let $v \in V(T)$ be any vertex. We denote the tournaments induced by the out- and insets of $v$ as $T_1 = \cG[O_T(v)]$ and $T'_1 = \cG[I_T(v)]$ respectively. By Observation~\ref{obs:equal} there exist vertices $v_1 \in T_1$ and $v_1' \in T'_1$, such that $od_{T_1}(v_1), id_{T'_1}(v_1') \leq 1$. Repeating the reasoning from the proof of the previous lemma, we observe that if $od_{T_1}(v_1) = 0$ or $id_{T'_1}(v_1') = 0$, then $\gamma ^* (T)\leq 4$. Therefore, let $od_{T_1}(v_1) = 1$ and $id_{T'_1}(v_1') = 1$ and we denote these vertices as $\{v_2\} = O_{T_1}(v_1)$ and $\{v'_2\} = I_{T'_1}(v'_1)$ respectively. If $(v_2,v_2'), (v_2,v'_1)$ or $(v_1,v_2')$ are in $A(T)$, then $S = \{v,v_1,v_1',v_2\}$ or $S = \{v,v_1,v_1',v_2'\}$ are twin domination sets of $T$. So let us assume that $(v_2',v_2), (v'_1,v_2), (v_2',v_1) \in A(T)$. Now, because $v_2$ (resp. $v_2'$) has already four incoming (resp. outcoming) arcs, all of the rest arcs have to be outcoming (resp. incoming), which means that the set $S = \{v_2,v'_2\}$ is a twin dominating set of $T$.
Therefore, $DOM^{*}(K_9)\leq 4$. By  Observation~\ref{obs:more_than_four}, we can conclude that $DOM^{*}(K_9)=4.$
\end{proof}

\section{The Linear Upper Bound of $DOM^*(K_n)$}\label{sec:even}

In this section we prove that for every integer  $n\geq 8$, the conjecture given in {\sc Arumugam} et al.~\cite{AES} is false.

\begin{theorem}\label{thm:max_for_even}
For every integer $n \geq 8$, the upper orientable twin domination number of $K_n$, $DOM^{*}(K_n) \leq \left\lfloor\frac{n}{2}\right\rfloor$. 
\end{theorem}

\begin{proof} 
We prove first the theorem for even $n$.

Let $n=2k$, $k\geq 4$. We use induction on $k$. 

If $k=4$, the theorem holds by Theorem~\ref{case8}. 

Suppose the theorem is true for any tournament $T_1$ of order $2k$. Let $T_2$ be a tournament  of order $2k+2$. Consider $v_1,v_2\in V(T_2),\ (v_1,v_2)\in A(T_2)$ and $T_1$ the subtournament of $T_2$ induced by $V(T_1)=V(T_2)-\{v_1,v_2\}$. 

By our induction hypothesis, there exist a twin domination set $S_1$ of $T_1$, such that $|S_1| \leq k$. If $|S_1| < k$, then $S_2= S_1\cup \{ v_1,v_2\}$ is a twin dominating set of $T_2$ with $|S_2|\leq k+1$. So, suppose  $|S_1| = k.$

\begin{enumerate}
\item If for one vertex $v \in S_1$, $(v, v_1) \in A(T_2)$ or $(v_2,v) \in A(T_2)$, then the set $S_2 = S_1 \cup \{ v_2\} $ or $S_2 = S_1 \cup \{ v_1\} $, respectively, is a twin dominating set of $T_2$ with cardinality $|S_2|=k+1$. Therefore we can assume that $v_1$ is a source and $v_2$ a sink with respect to the set $S_1$. Notice that $\{ v_1, v_2\} $ is a twin dominating set of $S_1$.

\item If one of the vertices in $T_2$ is source or sink the proof is finished according to Observation~\ref{sink-source}.  
So there  exists $o_1,o_2\in V(T_2)$ such that the arcs $(o_1,v_1), (v_2,o_2) \in A(T_2)$. If $o_1=o_2$, $v_1v_2o_2$ or $o_1v_1v_2$ is an oriented cycle in $T_2$, then $S_2= V(T_2) - \{ S_1\cup \{o_i\}\}$ with $i\in \{1,2\}$ is a twin dominating set of $T_2$ with $|S_2|=k+1$.  Thus, the arcs  $(o_1,v_2), (v_1,o_2) \in A(T_2)$. It is clear, that the set  $S_2 = V(T_2) - S_1$ is a twin dominating set of $T_2$ with cardinality $|S_2|=k+2$. We will prove that $S_2$ it is not minimum. Let $v \in S_2 - \{v_1, v_2, o_1, o_2 \}$.

\begin{enumerate}

\item If the arc $(o_2,o_1) \in A(T_2)$, then $S_2-\{ o_2\}$ is a twin dominating set of $T_2$ with cardinality $k+1$. So, we can suppose we have the arc $(o_1,o_2) \in A(T_2)$. See Figure~\ref{fig:even_proof}.

\item If the arc $(v,o_1)$ or $(o_2,v)$ is in $A(T_2)$, then $S_2-\{ o_1\}$ or $S_2-\{ o_2\}$ are twin dominating set of $T_2$, respectively. So we can assume that in $T_2$, we have the arcs $(o_1,v),(v,o_2)$, which implies that $S_2-\{ v\}$ is a twin dominating set of $T_2$ with cardinality $k+1$. 
\end{enumerate}
\end{enumerate}

The reasoning for odd numbers, i.e., $n = 2k + 1$ for $k \geq 4$, is similar, as the theorem holds for $k = 4$ by Theorem~\ref{case9}.
\end{proof}

 \begin{figure}[htb]
\centering
\begin{center}
  \includegraphics[scale=0.6]{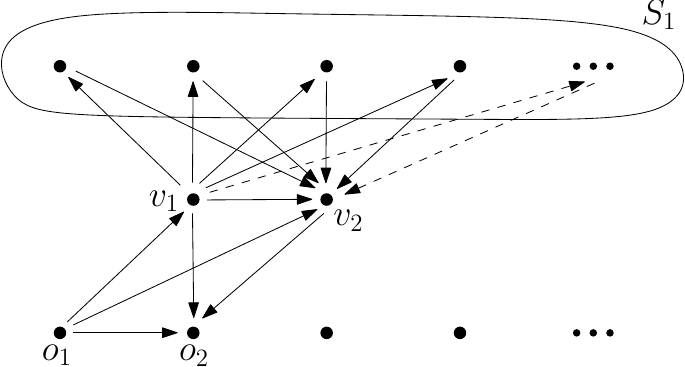}
	\end{center}
\caption{Tournament $T_2$; \textit{black dots} denote vertices from $V(T_2)$, where $V(T_1) = V(T_2)- \{v_1,v_2\}$ and $S_1$ denotes the twin domination set of $T_1$.}
\label{fig:even_proof}
\end{figure}

\section{The Logarithmic Upper Bound of $DOM^{*}(K_n)$}\label{sec:main}

In the following section we prove the $O(\log_2(n))$ upper bound of $DOM^{*}(K_n)$ for any $n \geq 4$.

\begin{theorem}\label{thm:upper_bound}
For every natural number $n \geq 4$, the upper orientable twin domination number of $K_n$, $DOM^{*}(K_n) \leq 2 \left\lceil\log_2{(n-1)} \right\rceil$. 
\end{theorem}

\begin{proof}
Let $T$ be any orientation of $K_n$ and $u \in V(T)$ any vertex.  
Let $T_1$ be the subtournament of $T$ induced by $O_T(u)$. As $u$ in-dominates $T_1$ there has to be found an out-dominating set in $T_1$, which is done in the following recursive way. Let $u_i \in T_i,\ i \in \natplus$ such that $od_{T_i}(u_i) \leq \left\lfloor\frac{n_i-1}{2}\right\rfloor$ (the existence of such a vertex is guaranteed by Observation~\ref{obs:equal}), where $n_i = |V(T_i)|$, then a recursive equations are 
\begin{align}\nonumber
T_1 &= \cG[O_T(u)], \\ \nonumber
T_i &= \cG[O_{T_{i-1}}(u_{i-1})],\ i=2,\ldots, l;
\end{align}
where $l$ is a first integer for which $|V(T_l)| \leq 2$. Set $S_1 = \{u_i | i=1,\ldots,l\}$ out-dominates $T_1$ and while the number of vertices at each step decreases twice $|S_1| \leq \left\lceil\log_2{od(u)}\right\rceil$. Similarly, we construct the in-dominating set $S_2$ for a subtournament of $T$ induced by $I_T(u)$, of the maximum cardinality $|S_2| \leq \left\lceil\log_2{id(u)} \right\rceil$. Set $S = S_1 \cup S_2 \cup \{u\}$ twin dominates $T$ and 
\begin{align}\nonumber
|S| &= |S_1| + |S_2| + 1 \leq \left\lceil\log_2{od(u)} \right\rceil + \left\lceil\log_2{id(u)} \right\rceil + 1 \leq \\ \nonumber
& \leq \left\lceil\log_2{\left(od(u)id(u)\right)} \right\rceil	+ 2 \leq 2 \left\lceil\log_2{\frac{n-1}{2}} \right\rceil + 2 = 2 \left\lceil\log_2{(n-1)} \right\rceil.
\end{align}
\end{proof}

This logarithmic upper bound is tighter than the linear one for $n > 21$, i.e., $2 \left\lceil\log_2{(n-1)} \right\rceil < \left\lfloor\frac{n}{2}\right\rfloor$ for $n > 21$. We summarize all results for $DOM^*(K_n)$ from {\sc Arumugam} et al.~\cite{AES} and this paper  in the following theorem.

\begin{theorem}\label{thm:main}
For $n \geq 1$,
$$
DOM^{*}(K_n) = \left\{ \begin{array}{ll}
\left\lceil\frac{n+1}{2}\right\rceil & \textrm{if $1 \leq n \leq 7$,}\\
4 & \textrm{if $n \in \{8,9\}$;}
\end{array} \right.
$$
$$
DOM^{*}(K_n) \leq \left\{ \begin{array}{ll}
\left\lfloor\frac{n}{2}\right\rfloor & \textrm{if $9 < n \leq 21$,} \\
2 \left\lceil\log_2{(n-1)} \right\rceil& \textrm{if $21 < n$.}
\end{array} \right.
$$
\end{theorem}

\vspace{2cc}

\medskip
\noindent
Research partially supported by Grant UNAM-PAPIIT-IN-114415 and National Science Centre (Poland) grant number 2015/17/B/ST6/01887.

\medskip
\noindent
The authors would like to thank G. Karolyi for his valuable suggestion on Theorem~\ref{thm:upper_bound}.

\end{document}